\def\R{\mathbb{R}}
\def\N{\mathbb{N}}
\def\C{\mathbb{C}}
\def\H{\mathbb{H}}
\def\S{\mathbb{S}}
\def\V{\mathcal{V}}
\def\I{\mathcal{I}}
\def\st{\, | \,}
\theoremstyle{plain}
\newtheorem{thm}{Theorem}[section]
\newtheorem{lemma}[thm]{Lemma}
\newtheorem{cor}[thm]{Corollary}
\newtheorem{defi}[thm]{Definition}
\newenvironment{customthm}[1]
  {\innercustomthm}
  {\endinnercustomthm}
\theoremstyle{remark}
\newtheorem{rk}[thm]{Remark}
\newcommand{\fri}{{\mathrm{i}}}
\newcommand{\frj}{{\mathrm{j}}}
\newcommand{\frk}{{\mathrm{k}}}
\title{On the geometry of zero sets of central quaternionic polynomials}
\author{Gil Alon and Elad Paran}
\begin{document}
\maketitle
\begin{abstract}
Let $R$ be the ring $\H[x_1,\ldots,x_n]$ of polynomials in $n$ central variables over the real quaternion algebra $\H$, and let $I$ be a left ideal in $R$. We prove that if $p \in R$ vanishes at all the common zeros of $I$ in $\H^n$ with commuting coordinates, then as a slice regular quaternionic function, $p$ vanishes at all common zeros of $I$ in $\H^n$. This confirms a conjecture of Gori, Sarfatti and Vlacci, who settled the two dimensional case. \end{abstract}

\section{Introduction}
Let $R=\H[x_1,\dotsc,x_n]$ be the ring of polynomials in $n$ commuting variables over the algebra of quaternions.
In our previous work \cite{AP21}, we have considered the space 
$$\H^n_c=\{ (q_1,\dotsc,q_n)\in \H^n \, | \, q_i q_j=q_jq_i \text{ for all } 1\leq i,j \leq n \}$$
The points of $\H^n_c$ are a natural choice of a domain for polynomials in $R$, to be interpreted as functions. 
For one, a point $(q_1,\dotsc,q_n)$ can be plugged in a polynomial in $R$ without having to define the order in which the powers of the coordinates $q_i$ are multiplied. Moreover, we have proved (loc. cit.):
\begin{enumerate}
    \item 
    The maximal left ideals of $R$ are in one-to-one correspondence with the points of $\H^n_c$.
    \item
    The weak Nullstellensatz: For any proper left ideal $I\subset R$, there exists a point $p\in \H^n_c$ such that $f(p)=0$ for all $f\in I$.
    \item 
    The strong Nullstellensatz: For any subset $I\subseteq R$, let us define $\V_c(I)=
    \{p\in \H^n_c \, | \, f(p)=0 \text{ for all } f\in I\}$. 
    For a subset $A \subseteq \H_c^n$,  let $\I(A)=\{f \in R\, | \,f(a)=0 \text{ for all } a\in A\}$. Then for any left ideal $I \subseteq R$ we have $\I(\V_c(I))=\sqrt{I}$, where the \emph{radical} $\sqrt{I}$ is defined as the intersection of all completely prime left ideals containing $I$. 
\end{enumerate}

In a recent paper \cite{GSV}, Gori Sarfatti and Vlacci suggested considering the entire quaternionic affine space $\H^n$ as a domain for the elements of $R$. For such an evaluation to be well defined, a linear order of the variables $x_1,\dotsc, x_n$ has to be picked. 
We then get a function $p:\H^n \rightarrow \H$ which belongs to the class of \emph{slice regular functions}. The theory of such functions, which is a quaternionic multi-variable extension of complex analysis, has been developed over the last 18 years, see \cite{GSS} and \cite{GP}.

In this setting Gori et al. ask whether the strong Nullstellensatz holds when all the $\H^n$ points are considered, i.e. does $\I(\V(I))=\sqrt{I}$ hold, for any left ideal $I\subseteq R$, where $\V(I)=\{ p\in \H^n \, | \, f(p)=0 \text{ for all } f\in I \}$?

The main result of \cite{GSV} states that this equality holds when $n=2$. Gori et al. conjectured that the same result holds in general, and gave some examples where the equality holds for $n>2$ (See ibid., Example 4.17 and the end of the introduction).

One should note that in general, if $A\subseteq \H^n$ is a set, $\V(A)$ need not be a left ideal. Hence the conjecture of Gori et al. implies, non-trivially, that for a set $A=\V(I)$, $\I(A)$ is indeed a left ideal.

Given our results in \cite{AP21}, the conjecture of Gori et al. is equivalent to the equality $\I(\V(I))=\I(\V_c(I))$. In this paper, we confirm this conjecture. Indeed, we prove:

\begin{thm}\label{thm:main}
Let $I$ be a left ideal of $R=\H[x_1,\dotsc,x_n]$. If a polynomial $f\in R$ vanishes on $\V_c(I)$, then $f$ vanishes on $\V(I)$.
\end{thm}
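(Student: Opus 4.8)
\emph{Reduction and the base case.} By the strong Nullstellensatz of \cite{AP21} (item (3) of the introduction), a polynomial vanishes on $\V_c(I)$ exactly when it lies in $\sqrt I$; thus Theorem~\ref{thm:main} is equivalent to the inclusion $\sqrt I\subseteq\I(\V(I))$, i.e. to the assertion that every $f\in\sqrt I$ vanishes, as a slice regular function, at every $\mathbf q=(q_1,\dotsc,q_n)\in\V(I)$. Fix such $\mathbf q$ and $f$. If $q_1,\dotsc,q_n$ all lie on a single complex slice $\C_J=\R+\R J$ (with $J^2=-1$) then $\mathbf q\in\H^n_c$, hence $\mathbf q\in\V_c(I)$ and $f(\mathbf q)=0$ by hypothesis. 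This handles $n=1$ (where $\H^1_c=\H$), which we take as the base of an induction on $n$.

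\emph{Peeling off the last variable.} Let $n\ge 2$, put $T=\H[x_1,\dotsc,x_{n-1}]$, regard $R=T[x_n]$ with $x_n$ the outermost (hence central) variable, and split $\mathbf q=(\mathbf q',q_n)$, $\mathbf q'=(q_1,\dotsc,q_{n-1})$. Substituting $\mathbf q'$ into the $T$-coefficients yields a map
$$\pi\colon R\to\H[x_n],\qquad \pi\Big(\textstyle\sum_k h_k\,x_n^k\Big)=\sum_k h_k(\mathbf q')\,x_n^k\quad(h_k\in T),$$
where $h_k(\mathbf q')$ is the $(n-1)$-variable slice regular evaluation. A direct check, using that $x_n$ is central and that evaluation is left $\H$-linear in the coefficients, shows that $\pi$ is additive and left $\H[x_n]$-linear, that $g(\mathbf q',z)=\bigl(\pi(g)\bigr)(z)$ for every $g\in R$ and $z\in\H$, and hence that $\pi(I)$ is a left ideal of $\H[x_n]$. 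Since $\H[x_n]$ is a left Euclidean domain (leading coefficients are units of $\H$), we have $\pi(I)=\H[x_n]\,\phi$ for a single $\phi\in\H[x_n]$, and $(\mathbf q',z)\in\V(I)\iff\phi(z)=0$; in particular $\phi(q_n)=0$.

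\emph{Reduction to a one-variable membership.} The $n=1$ case of the strong Nullstellensatz gives $\sqrt{\pi(I)}=\I(\V(\phi))$ inside $\H[x_n]$. Therefore, as soon as $\pi(f)\in\sqrt{\pi(I)}$, the polynomial $\pi(f)$ vanishes at every zero of $\phi$, in particular at $q_n$, so $f(\mathbf q)=\bigl(\pi(f)\bigr)(q_n)=0$ and the induction closes. Hence it suffices to prove
$$f\in\sqrt I\ \Longrightarrow\ \pi(f)\in\sqrt{\pi(I)},$$
which, after unwinding both radicals via the strong Nullstellensatz, reads: \emph{if $f$ vanishes on $\V_c(I)$ then $f(\mathbf q',z)=0$ for every $z\in\H$ with $(\mathbf q',z)\in\V(I)$.}

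\emph{The crux.} By the classical geometry of zero sets of one-variable quaternionic polynomials, $Z=\V(\phi)\subseteq\H$ is a finite union of ``spheres'' $\S_w=\{a+bJ:J^2=-1\}$ (for $w=a+bI$) and of isolated points. On a spherical component $\S_w\subseteq Z$ the assignment $z\mapsto f(\mathbf q',z)=\bigl(\pi(f)\bigr)(z)$ is an affine slice function, so it is enough to kill it at two points of $\S_w$; the plan here is to exploit that the symmetrization $f^s:=f\,f^c\in\R[x_1,\dotsc,x_n]$ has real coefficients, hence transforms equivariantly under conjugation, $f^s(u\mathbf q u^{-1})=u\,f^s(\mathbf q)\,u^{-1}$, and to combine this with the representation formula and with the inductive hypothesis applied to the left ideal of $T$ obtained by peeling $x_n$ off $I$ at a suitable zero of $\phi$. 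The isolated zeros of $\phi$ are the delicate case: no sphere is available, and one must show directly that $f(\mathbf q',w)=0$ at each such $w$; since $w$ is then a root of $\phi$, $x_n-w$ is a right factor of $\phi$, and I expect controlling $f$ at these finitely many points to be the principal obstacle, to be resolved by feeding the factorization of $\phi$ together with the $(n-1)$-variable inductive hypothesis back through $\pi$. (An equivalent reformulation that may be useful: the largest left ideal $P$ of $R$ contained in $\{g\in R\st g(\mathbf q)=0\}$ contains $I$ — because $I$ is a left ideal and $\mathbf q\in\V(I)$ — so Theorem~\ref{thm:main} follows for $\mathbf q$ once $P$ is shown to be a radical left ideal, since then $\sqrt I\subseteq\sqrt P=P\subseteq\{g\st g(\mathbf q)=0\}$.)
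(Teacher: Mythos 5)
Your reduction has some sound pieces: the map $\pi\colon R\to\H[x_n]$ obtained by evaluating the $T$-coefficients at $\mathbf q'$ is indeed additive and left $\H[x_n]$-linear, the identity $g(\mathbf q',z)=(\pi(g))(z)$ holds with the paper's substitution order (where $x_n$ is innermost), $\pi(I)=\H[x_n]\phi$ is principal, and zeros of the right factor $\phi$ are zeros of every left multiple, so the fiber of $\V(I)$ over $\mathbf q'$ is exactly $Z(\phi)$. But at that point nothing has been gained: the statement you still need, $f\in\sqrt I\Rightarrow\pi(f)\in\sqrt{\pi(I)}$, unwinds (as you yourself note) to ``$f$ vanishes at every point of $\V(I)$ of the form $(\mathbf q',z)$'', which is precisely the theorem for the point $\mathbf q$ you fixed. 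Your proposed induction on $n$ does not close this circle. The base case $n=1$ is vacuous (there $\H^1_c=\H$), and the inductive hypothesis in $n-1$ variables is never applicable: $\mathbf q'$ is an arbitrary non-commuting tuple, and the ``left ideal of $T$ obtained by peeling $x_n$ off $I$ at a suitable zero of $\phi$'' is not defined and cannot be defined in the obvious way, since substituting a non-real quaternion for the innermost variable $x_n$ alone does not yield elements of $T$, let alone a left ideal of $T$ (left multiplication by polynomials in $x_1,\dotsc,x_{n-1}$ does not commute with such a substitution). The crux paragraph — handling spherical components via the symmetrization $ff^c$ and the representation formula, and the isolated zeros of $\phi$ via its factorization — is explicitly a plan (``I expect\dots to be resolved by\dots''), not an argument, and it is exactly where the whole difficulty lives. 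The closing reformulation (show that the largest left ideal contained in $\{g\st g(\mathbf q)=0\}$ is radical) is likewise a restatement of the problem, not a step toward its solution.

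For comparison, the paper attacks the missing step directly and with different tools: Lemma \ref{lem:conj_root} shows that left multiplication by $x_i$ evaluated at a zero conjugates the tail of the point, whence (Lemma \ref{lem:vanish_on_sphere_2}) an entire embedded sphere through the given point $p\in\V(I)$ lies in $\V(I)$; iterating this produces a tree of embedded spheres inside $\V(I)$ whose leaves contain pairs of points of $\H^n_c$, i.e.\ of $\V_c(I)$, obtained by successively aligning the coordinates of $p$ into a single slice via conjugations $\alpha_i^{\pm}$. Then Lemma \ref{lem:vanish_on_sphere} (a polynomial vanishing at two distinct points of an embedded sphere vanishes on all of it) is applied repeatedly from the leaves back to the root to conclude $f(p)=0$. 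If you want to salvage your fiber-wise setup, you would still need an analogue of this conjugation/sphere mechanism to relate the non-commuting point $\mathbf q'$ to points of $\H^{n-1}_c$; as written, the proposal has a genuine gap at its central step.
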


As discussed above, from our main theorem, we immediately conclude the following strong Nullstellensatz:
\begin{cor}
Let $I$ be a left ideal of $R=\H[x_1,\dotsc,x_n]$. Then $\I(\V(I))=\sqrt{I}$.
\end{cor}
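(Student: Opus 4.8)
The plan is to understand precisely how a point $q = (q_1,\dotsc,q_n) \in \H^n$ — with coordinates that need not commute — can be related to a point of $\H^n_c$ via the slice-regular evaluation. Recall that for a polynomial $f \in R$ written (after fixing the variable order) as $\sum_\alpha c_\alpha x^\alpha$, the slice-regular evaluation $f(q)$ is $\sum_\alpha c_\alpha q^\alpha$, where the monomials $q^\alpha = q_1^{\alpha_1}\cdots q_n^{\alpha_n}$ are computed in the fixed order but the powers of distinct coordinates are \emph{not} reordered. The key observation I would pursue: fix $q \in \H^n$. Each coordinate $q_i$ lies on a complex slice $\C_{J_i} = \R + \R J_i$ for some imaginary unit $J_i$ with $J_i^2 = -1$ (if $q_i \in \R$ the slice is not unique, which gives extra freedom). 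If there existed a single imaginary unit $J$ and elements $z_i \in \C_J$ such that evaluating $f$ at $(z_1,\dotsc,z_n)$ gives the same answer as at $q$ for \emph{all} $f$ simultaneously, we would be done, because points on a common slice commute and hence lie in $\H^n_c$. This is too much to hope for directly, but the representation formula / slice structure of quaternionic analysis suggests a workable substitute.

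Concretely, here is the approach I would carry out. First, reduce to the case where $I$ is finitely generated (Noetherianity of $R$, which holds since $\H$ is a division ring and the variables are central, so $R$ is a left and right Noetherian domain), so $\V_c(I) = \V_c(f_1,\dotsc,f_m)$ and we must show: if $q \in \H^n$ satisfies $f_j(q) = 0$ for all $j$, then $f(q) = 0$. Second — the heart of the matter — I would show that for a fixed $q \in \H^n$, the set of polynomials vanishing at $q$ under slice-regular evaluation is controlled by points of $\H^n_c$. The mechanism I expect to use: conjugation. For any $a \in \H^\times$, the map $x_i \mapsto a^{-1} x_i a$ induces a ring automorphism of $R$ fixing $\R[x_1,\dotsc,x_n]$ pointwise, and slice-regular evaluation interacts with it in a controlled way. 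By simultaneously conjugating, one can try to move the tuple $(q_1,\dotsc,q_n)$ so that, say, $q_1$ becomes $\fri$-complex; but the obstruction is that a single conjugation cannot in general diagonalize all coordinates onto one slice. This is exactly why the problem is nontrivial for $n > 2$ and was only previously settled for $n=2$.

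The route I would therefore favor is an inductive / specialization argument on $n$. Suppose $q = (q_1,\dotsc,q_n) \in \V(I)$. Consider the ``last'' variable: after conjugating so that $q_n \in \C_J$ for a fixed unit $J$, substitute $x_n = q_n$ (a central element of the smaller ring $\H[x_1,\dotsc,x_{n-1}]$ only if $q_n$ is central, i.e. real — so instead one must be more careful and work with the left ideal generated by $x_n - q_n$ together with $I$, using that $q_n$ commutes with everything evaluated on the slice $\C_J$). More precisely, let $I' \subseteq \H[x_1,\dotsc,x_{n-1}]$ be obtained by specializing $x_n \mapsto q_n$; the tuple $(q_1,\dotsc,q_{n-1})$ — suitably arranged — is a zero of $I'$, and one wants a zero of $I'$ in $\H^{n-1}_c$ that also is ``compatible'' with $q_n$ so that the whole $n$-tuple lands in $\H^n_c$. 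The main obstacle, which I would expect to occupy most of the proof, is precisely this compatibility: ensuring that the commuting $(n-1)$-tuple produced by the inductive hypothesis can be chosen on a slice that also contains $q_n$, or more generally commutes with it. I anticipate that resolving this requires a genuinely new idea beyond the $n=2$ argument of \cite{GSV} — likely an analysis of the zero set of a single slice-regular polynomial in several variables showing that it is, in a suitable sense, a union of ``axially symmetric'' pieces, together with a dimension-counting or algebraic argument (using the structure of completely prime left ideals from the strong Nullstellensatz over $\H^n_c$ proved in \cite{AP21}) to guarantee that a common commuting zero compatible with the fixed coordinate always exists.
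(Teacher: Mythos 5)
Your proposal does not prove the statement; it is a plan whose central step is explicitly left open. The corollary itself is a two-line consequence of results already available: by Theorem \ref{central_null} one has $\I(\V_c(I))=\sqrt{I}$, and since $\V_c(I)\subseteq\V(I)$ the only nontrivial point is the inclusion $\I(\V_c(I))\subseteq\I(\V(I))$, which is exactly Theorem \ref{thm:main}. You correctly identify this as the real content and attempt it directly, but the decisive ``compatibility'' step --- producing, from an arbitrary $p=(q_1,\dotsc,q_n)\in\V(I)$, commuting zeros of $I$ that force $f(p)=0$ --- is acknowledged rather than carried out (``I anticipate that resolving this requires a genuinely new idea\dots''). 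In addition, the specialization $x_n\mapsto q_n$ you propose does not produce a left ideal of $\H[x_1,\dotsc,x_{n-1}]$ in any evident way, since $q_n$ is not central, and the Noetherian reduction buys nothing.

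The missing mechanism is also aimed at the wrong target: the paper never finds a single point of $\H^n_c$ ``compatible'' with $p$ (in general none exists), and one commuting zero would not suffice anyway. Instead, the key algebraic fact is Lemma \ref{lem:conj_root}: for $v=(v_1,v_2)\in\V(I)$ with $i$-th coordinate $q_i\neq 0$, the identity $(x_if)(v_1,v_2)=f(v_1,v_2^{q_i})\,q_i$ shows that $\V(I)$ is stable under conjugating the tail $v_2$ by $q_i$, because $I$ is a left ideal. Combined with Lemma \ref{lem:affine_on_sphere} (a central polynomial restricted to an embedded sphere is affine in the imaginary unit), this gives Lemma \ref{lem:vanish_on_sphere_2}: whenever the tail of $p$ lies in $\H^{n-i}_c$ but appending $q_i$ breaks commutativity, the whole embedded sphere $\{(q_1,\dotsc,q_i)\}\times\S_{(q_{i+1},\dotsc,q_n)}$ lies in $\V(I)$. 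Iterating this block by block, the paper builds a binary tree of embedded spheres inside $\V(I)$ (the two conjugators $\alpha^{+}_i,\alpha^{-}_i$ sending the current slice unit to $\pm I_{i+1}$ give the branching), whose leaves contain \emph{pairs} of distinct points of $\V_c(I)$. There $f$ vanishes by hypothesis, so by Lemma \ref{lem:vanish_on_sphere} ($f$ vanishing at two distinct points of an embedded sphere vanishes on the whole sphere) $f$ vanishes on the leaf spheres, and the same two-point argument propagates the vanishing back up the tree to the root sphere containing $p$, giving $f(p)=0$. The need for two distinct zeros on each sphere is precisely why a single compatible commuting zero, even if you could produce one, would not close your induction.
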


The proof of Theorem \ref{thm:main} has a geometric nature, and sheds some light on the structure of zero sets of polynomials in $R$. We define the notion of an \emph{embedded sphere} in $\H^n$ (see definition \ref{def:embedded_sphere} below). In analogy with the one-variable theory, we prove that a polynomial $f\in R$ that vanishes at two distinct points of an embedded sphere, vanishes on the entire sphere. Given a polynomial $f \in R$ that vanishes on $\V_c(I)$, and a point $p\in \V(I)$, we construct a finite set of embedded spheres, organized in the structure of a binary tree. At the root of the tree there is a sphere that contains the point $p$, and the spheres at the leaves each contain two points in $\H^n_c$. By an inductive argument from the root the leaves, we show that all the spheres are contained in $\V(I)$. In particular, each sphere at the leaves of the tree contains two points in $V_c(I)$. Therefore, $f$ vanishes on those points, and by the above claim on embedded spheres, $f$ vanishes on the entire sphere. We then turn backwards and perform an induction from the leaves to the root, to show that $f$ vanishes on all the spheres. As the sphere at the root of the tree contains $p$, we conclude that $f(p)=0$.

The rest of this paper is organized as follows: In section \ref{sec:prelim} we recall some definitions and basic facts about the quaternions and the central polynomial ring $R$. In section \ref{sec:spheres} we introduce the notion of an embedded sphere and prove some of its properties. Finally, in section \ref{sec:proof} we prove our main result. 
\section{Prelinimaries} \label{sec:prelim}

Let $\H=\R + \R \fri + \R \frj + \R \frk$ be Hamilton's algebra of quaternions\footnote{Throughout this paper, we shall denote the standard non-real generators of $\H$ by the symbols $\fri,\frj,\frk$, as opposed to the letters $i,j,k$ which we reserve for indices.}, 
and let $R=\H[x_1,\dotsc,x_n]$ be the ring of central polynomials over $\H$. An element of $R$ has the form 
$f(x)=\sum_I a_I x^I$ where $x=(x_1,\dotsc,x_n)$, the sum is over multi-indices $I=(i_1,\dotsc,i_n)\in {(\N \cup \{0\})}^n$, and $x^{(i_1,\dotsc,i_n)}=\prod_{u}{x_u}^{i_u}$.

The multiplication in $R$ is defined as follows:
$$\sum a_I x^I \sum b_J x^J=\sum_{I,J}a_Ib_Jx^{I+J}$$ 

In orded to define the substitution of a point $p=(q_1,\dotsc,q_n)\in \H^n$ in a polynomial $f(x)=\sum_I a_I x^I$, we choose the following linear order on the variables: $x_1>x_2>\dotsc>x_n$. We therefore define
$$f(p)=\sum_{I=(i_1,\dotsc,i_n)\in {(\N \cup \{0\})}^n} a_I {x_1}^{i_1}\cdot \dotsc \cdot {x_n}^{i_n}$$
With this substitution in mind, given a left ideal $I$ in $R$, we consider two notions of ``vanishing sets":

$$\V(I) = \{p \in \H^n \st f(p) = 0\hbox{ for all } f \in I\}$$
and
$$\V_c(I) = \{p \in \H_c^n \st f(p) = 0\hbox{ for all } f \in I\}.$$

Clearly, we always have $\V_c(I) \subseteq \V(I)$; Equality does not always hold, see \cite[Example 4.5]{GSV}. 

\begin{defi}
Let $A$ be an associative ring. A left ideal $P$ in $A$ is called {\it completely prime}, if for any $a,b \in P$ with $ab \in P$ and $Pb \subseteq b$ it follows that $a \in P$ or $b \in P$.  \end{defi}

This notion was introduced in \cite{Reyes} by Reyes, who showed that one-sided completely prime ideals are, from certain aspects of non-commutative ring theory, ``better" analogues of prime ideals in commutative rings than the naive notion (see also \cite{Glas}). This notion is key to the formulation of the quaternionic central Nullstellensatz in \cite{AP21}, which states:

\begin{thm}\label{central_null}
Let $I$ be a left ideal in $R$. A polynomial $f \in R$ vanishes at all points of $\V_c(I)$ if and only if $f$ belongs to the {\it radical} $\sqrt{I}$ of $I$ -- the intersection of all completely prime left ideals of $R$ containing $I$. \end{thm}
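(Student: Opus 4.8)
The plan is to establish the two inclusions $\sqrt I\subseteq\I(\V_c(I))$ and $\I(\V_c(I))\subseteq\sqrt I$ separately. The central objects are the left ideals $M_p:=\{g\in R\st g(p)=0\}$ attached to points $p\in\H_c^n$: by the correspondence between maximal left ideals of $R$ and points of $\H_c^n$ proved in \cite{AP21}, these are exactly the maximal left ideals of $R$. I would first record the computational fact underlying everything, the \emph{substitution identity} $(uv)(p)=\sum_I u_I\,v(p)\,p^I$ for $u=\sum_I u_I x^I$, $v\in R$ and $p\in\H_c^n$, valid because the coordinates of $p$ commute, so $p^{I+J}=p^J p^I$. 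It has two immediate consequences: $M_p$ is a left ideal, and evaluation $\mathrm{ev}_p\colon R\to\H$ descends to an isomorphism of left $R$-modules $R/M_p\cong\H$.

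For $\sqrt I\subseteq\I(\V_c(I))$: if $p\in\V_c(I)$ then $I\subseteq M_p$, so it suffices to show $M_p$ is completely prime — then $\sqrt I\subseteq M_p$ and every $f\in\sqrt I$ vanishes at $p$. To check this, suppose $ab\in M_p$ and $M_p b\subseteq M_p$; if $b(p)=0$ we are done, so let $c:=b(p)\ne 0$. The map $h\mapsto\sum_I h_I\,c\,p^I$ is left $\H$-linear and, by the substitution identity, equals $h\mapsto(hb)(p)$, hence vanishes on $M_p$; it therefore factors through $R/M_p\cong\H$, and being left $\H$-linear it is, under this identification, right multiplication by $c$ (its value at the constant polynomial $1$). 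Thus $(hb)(p)=h(p)c$ for all $h$, so $0=(ab)(p)=a(p)c$, and since $\H$ is a division ring, $a(p)=0$, i.e. $a\in M_p$. (The normalizing clause $M_p b\subseteq M_p$ in Reyes's definition \cite{Reyes} is essential here: without it $M_p$ is in general not completely prime.)

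For the reverse inclusion, let $f$ vanish on $\V_c(I)$; I would run a Rabinowitsch-type argument. Adjoin a central variable $t$, and in $R[t]=\H[x_1,\dotsc,x_n,t]$ form the left ideal $J'=R[t]I+R[t](1-tf)$. Then $\V_c(J')=\emptyset$, since a common zero $(q,s)\in\H_c^{n+1}$ would have $q\in\V_c(I)$, hence $f(q)=0$ and $(1-tf)(q,s)=1\ne 0$. By the weak Nullstellensatz of \cite{AP21}, $J'=R[t]$, giving an identity $1=\sum_i g_i h_i+g_0(1-tf)$ with $h_i\in I$, $g_i,g_0\in R[t]$. Now fix a completely prime left ideal $P\supseteq I$; reducing this identity modulo the left ideal $R[t]P$, expanding in powers of $t$, and comparing coefficients — writing $g_0=\sum_k g_0^{(k)}t^k$ — one gets $g_0^{(0)}\equiv 1$ and $g_0^{(m)}\equiv g_0^{(m-1)}f\pmod P$ for $m\ge 1$; as $g_0$ has bounded $t$-degree this forces $f^N\in P$ for some $N$, and repeated use of complete primeness then gives $f\in P$. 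Since $P$ was arbitrary, $f\in\sqrt I$.

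The main obstacle is precisely the chaining of the congruences $g_0^{(m)}\equiv g_0^{(m-1)}f$: each step multiplies a congruence on the right by $f$, which is legitimate only when $P$ is right-normalized by $f$, i.e. $Pf\subseteq P$, and for a general completely prime left ideal this can fail — the failure of $\mathrm{ev}_p$ and of reduction modulo $P$ to be multiplicative means one must track the side on which coefficients sit. Handling this, I expect, requires analysing the completely prime left ideals of $R=\H\otimes_\R\R[x_1,\dotsc,x_n]$: if $P$ is one, then $\mathfrak p:=P\cap\R[x_1,\dotsc,x_n]$ is a prime of the center, and passing to the residue field $K=\mathrm{Frac}(\R[x_1,\dotsc,x_n]/\mathfrak p)$ reduces matters to $\H\otimes_\R K$, which is a division ring when $K$ is formally real — then $P=R\mathfrak p$, the normalization is automatic, and one invokes the real Nullstellensatz over $\R[x_1,\dotsc,x_n]$ — and a split matrix algebra otherwise, in which case $P$ is governed by a point of a projective line just as $M_p$ is. Carrying this case distinction through, so that the Rabinowitsch extraction applies uniformly, is the technical heart of the proof.
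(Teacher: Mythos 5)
First, a point of context: this paper does not prove Theorem \ref{central_null} at all — it is imported from \cite{AP21} as a known ingredient — so there is no internal proof to compare yours against; I can only judge your argument on its own terms. The first half of your proposal is correct and complete: the substitution identity $(uv)(p)=\sum_I u_I\,v(p)\,p^I$ for $p\in\H^n_c$ does show that $M_p$ is a left ideal and that $\mathrm{ev}_p$ is left $\H$-linear with kernel $M_p$, and your use of the normalizing hypothesis $M_pb\subseteq M_p$ to deduce $(hb)(p)=h(p)\,b(p)$ and hence $a(p)=0$ is a clean verification that each $M_p$ is completely prime, which yields $\sqrt{I}\subseteq\I(\V_c(I))$.

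The genuine gap is in the hard inclusion $\I(\V_c(I))\subseteq\sqrt{I}$, which is the actual content of the theorem. The Rabinowitsch setup is fine up to the identity $1=\sum_i g_ih_i+g_0(1-tf)$, and since $R[t]P=\bigoplus_k Pt^k$ the coefficientwise reduction modulo $P$ is legitimate; but, exactly as you acknowledge, converting the resulting congruences into $f\in P$ requires right-multiplying congruences by $f$, and the final descent from $f^N\in P$ to $f\in P$ via complete primeness likewise needs $Pf\subseteq P$ — neither is available for an arbitrary completely prime left ideal. Your proposed repair is only a sketch, and as stated it is not sound: the dichotomy ``$\H\otimes_\R K$ is a division ring when $K$ is formally real and a split matrix algebra otherwise'' is not exhaustive once $n\geq 3$, since a non-real residue field $K$ can have level $4$, so that $\H\otimes_\R K$ is still a division ring while $\mathfrak{p}=P\cap\R[x_1,\dotsc,x_n]$ is not a real prime — in that situation your intended appeal to the real Nullstellensatz (recovering $\mathfrak p$ from its real points) fails, and the claim $P=R\mathfrak p$ and the entire split-case analysis (``governed by a point of a projective line'') are asserted rather than proved. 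So what you have is a correct proof of the easy inclusion together with a plan, not a proof, for the hard one; the latter is precisely the part that \cite{AP21} is cited for, and it remains open in your write-up.
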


We note that a different characterization for the radical $\sqrt{I}$ was given in a recent paper of Aryapoor \cite{Ar23}, from which he obtains a more explicit version of the Nullstellensatz of \cite{AP21} . This characterization shall not be needed in the sequel. 

As we stated in the introduction, Theorem \ref{central_null} can be stated succintly in the equality $\I(\V_c(I))=\sqrt{I}$, for a left ideal $I\subseteq R$. Our main theorem in this paper (Theorem \ref{thm:main}) states that $\I(\V(I))=\I(\V_c(I))$, from which it will follow that $\I(\V(I))=\sqrt{I}$.

We now describe some well known and easy facts about the quaternions. Consider the set $\S= \{q\in \H \, | \, q^2 = -1\}$.

\begin{lemma}\label{lem:quaternion_facts} 
\begin{enumerate}
\item
$\S=\{a \fri+b \frj+c \frk\, | \,a,b,c\in \R, a^2+b^2+c^2=1\}$
\item 
Any $q\in \mathbb H$ has a representation as $q=a+bI$ where $a,b\in \mathbb {R}$ and $I\in \S$. 
\item
Any two elements of $\S$ are conjugate in the multiplicative group $\H^*$.
\end{enumerate}
\end{lemma}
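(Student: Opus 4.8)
The plan is to read off all three statements directly from the multiplication table of $\H$. Write a general quaternion as $q = t + a\fri + b\frj + c\frk$ with $t,a,b,c \in \R$; call $t$ the real part of $q$ and $v = a\fri + b\frj + c\frk$ its vector part.

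For part (1), I would expand $q^2$ using $\fri^2 = \frj^2 = \frk^2 = -1$ and $\fri\frj = -\frj\fri = \frk$ together with its cyclic images; the products of distinct basis vectors cancel in pairs, leaving $q^2 = t^2 - (a^2 + b^2 + c^2) + 2tv$. Imposing $q^2 = -1$ forces $2tv = 0$; if $v = 0$ then $t^2 = -1$ has no real solution, so $t = 0$, and the real part of the equation then reads $a^2 + b^2 + c^2 = 1$. Conversely, any $q = a\fri + b\frj + c\frk$ with $a^2+b^2+c^2 = 1$ squares to $-1$ by the same formula, which gives (1). For part (2), given $q = t + v$: if $v = 0$, write $q = t + 0\cdot\fri$ with $\fri \in \S$; if $v \neq 0$, set $r = \sqrt{a^2+b^2+c^2} > 0$ and $I = r^{-1}v$, so that $I \in \S$ by (1) and $q = t + rI$ with $t, r \in \R$ (one may even take the coefficient of $I$ nonnegative).

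For part (3), given $I, J \in \S$, I would look for $u \in \H^*$ with $uI = Ju$, which yields $uIu^{-1} = J$; note that any nonzero $u$ is automatically invertible, since $\H$ is a division ring. When $I \neq -J$, take $u = I + J \neq 0$: from $I^2 = J^2 = -1$ one computes $uI = (I+J)I = -1 + JI = J(I+J) = Ju$, as required. When $I = -J$, choose $K \in \S$ whose vector part is orthogonal — for the standard inner product on the three-dimensional space of vector quaternions — to that of $I$; such $K$ exists by a dimension count, and orthogonal vector quaternions anticommute, so $KI = -IK$. Then, using $K^{-1} = -K$, we get $KIK^{-1} = (-IK)(-K) = IK^2 = -I = J$. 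Hence a suitable $u$ exists in every case. The statements are elementary and present no genuine obstacle; the only points that deserve a line of care are the invertibility of the conjugating element in (3), which is why I split into the cases $I \neq -J$ and $I = -J$, and the existence in the second case of an element of $\S$ anticommuting with a given $I$, which is the dimension count just used.
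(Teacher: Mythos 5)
Your proof is correct. Note that the paper itself records these as ``well known and easy facts'' and supplies no proof at all, so there is nothing to compare against on the paper's side; your elementary verification fills the gap. Parts (1) and (2) are exactly the standard computation. For part (3), your explicit conjugator $u = I + J$ in the case $I \neq -J$ is a nice trick (the identity $(I+J)I = -1 + JI = J(I+J)$ is checked correctly), and the case $I = -J$ is handled properly via an anticommuting element $K$; the one step you leave implicit, that orthogonal purely imaginary quaternions anticommute, follows in one line from $pq + qp = -2\langle p, q\rangle$ for pure quaternions $p,q$, so it is harmless, though worth stating. An alternative, more structural route to (3) would be to observe that every element of $\S$ is a root of $x^2+1$, has real part $0$ and norm $1$, and that quaternions with the same real part and norm are conjugate (or invoke Skolem--Noether for the embedding $\C \hookrightarrow \H$ determined by $I$); your direct construction is more elementary and yields the conjugating element explicitly, which is all the paper needs.
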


Let us define, for any $I\in \S$, $L_I=\R + \R I$. Such a set is a maximal commutative subfield of $\H$, isomorphic to $\C$. By Lemma \ref{lem:quaternion_facts}, $\H = \bigcup_{I \in \S} \S_I$. The sets $L_I$ are called the \emph{slices} of $\H$ (from which the term 'slice regular functions' is derived). Given $I_1, I_2\in \S$, we either have $I_1=\pm I_2$, in which case $L_{I_1}=L_{I_2}$, or $L_{I_1} \cap L_{I_2}= \R$. Consequently, two elements of $\H$ commute if and only if they belong to the same slice, and more generally,
\begin{equation} \label{eq:slices}
\H^n_c=\bigcup_{I\in \S}(L_I)^n
\end{equation}

\begin{rk} The notion of substitution in polynomials in this paper (as well as in \cite{AP21})  differs from that in  \cite{GSV}, \cite{GP} and \cite{GSS}. We adhere to the notation $f(x)=\sum a_I x^I$ while in the above works the notation $f(x)=\sum  x^I a_I$ is used. Being in a non-commutative setting, these definitions produce different interpretations of central polynomials as functions on $\H^n$. However, as noted in \cite{GSV} it is very easy to translate between the two languages, since $\H$ is isomorphic to the opposite ring $\H^{\text{op}}$ via the conjugation map. Therefore our results hold in the opposite notation as well, when left ideals are replaced with right ideals.
\end{rk}

\section{Embedded spheres} \label{sec:spheres}
For any $q\in \H$ and $v=(q_1,\dotsc,q_n)\in \H^n$, let us denote $$v^q=(qq_1q^{-1},\dotsc,q q_n q^{-1}).$$
Let us also denote for such $v$, $\S_v=v^{\H^*}$. If $v\in \H^n_c$ then by (\ref{eq:slices}), $v\in (L_I)^n$ for some $I\in \S$. We can therefore write $v=(a_1+ b_1I,\dotsc, a_n+b_n I)$ for some $a_i, b_i\in \R$. By Lemma \ref{lem:quaternion_facts} we conclude that
\begin{align*}
 \S_v &=\{(a_1+ b_1I^q,\dotsc, a_n+b_n I^q) \st q\in \H^*\} \\
 &=\{(a_1+ b_1 J,\dotsc, a_n+b_n J) \st J\in \S\}
\end{align*}

The following lemma shows that on sets of the above form, central polynomials are affine functions:

\begin{lemma}\label{lem:affine_on_sphere}
Let $f\in \H[x_1,\dotsc, x_n]$ and let $a_1,\dotsc, a_n, b_1,\dotsc, b_n\in \R$. Then there exist $q_1,q_2 \in \H$ such that for all $I\in \S$, $f(a_1+b_1I,\dotsc,a_n+b_nI)=q_1+q_2I$.    
\end{lemma}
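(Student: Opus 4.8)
The plan is to substitute $x_u = a_u + b_u I$ into $f$ and track how powers of $I$ behave. The key algebraic fact is that $I^2 = -1$ for every $I \in \S$, so the subring $L_I = \R + \R I$ is commutative and isomorphic to $\C$; in particular, within $L_I$ any product of elements $a_u + b_u I$ collapses to something of the form $\alpha + \beta I$ with $\alpha, \beta \in \R$. Concretely, for a single monomial $x^{J} = \prod_u x_u^{j_u}$ evaluated at the point $(a_1 + b_1 I, \dots, a_n + b_n I)$, all the factors lie in the commutative field $L_I$, so the product equals $P_J(I)$ where $P_J$ is obtained by ordinary polynomial multiplication in $\R[t]/(t^2+1)$; thus $x^J \mapsto \alpha_J + \beta_J I$ for real constants $\alpha_J, \beta_J$ that do \emph{not} depend on $I$ (they depend only on the $a_u, b_u$ and the reduction $t^2 \equiv -1$). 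This is the crucial point: the reduction rule $I^2 = -1$ is uniform across all $I \in \S$, so the real and imaginary coordinates of the monomial's value are the same polynomial expressions in $a_u, b_u$ regardless of which $I$ we picked.

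Next I would assemble the full polynomial. Writing $f(x) = \sum_J c_J x^J$ with $c_J \in \H$, evaluation at the point gives $f(a_1 + b_1 I, \dots, a_n + b_n I) = \sum_J c_J(\alpha_J + \beta_J I) = \left(\sum_J \alpha_J c_J\right) + \left(\sum_J \beta_J c_J\right) I$. Setting $q_1 = \sum_J \alpha_J c_J$ and $q_2 = \sum_J \beta_J c_J$ — finite sums since $f$ has finitely many terms — we obtain $f(a_1 + b_1 I, \dots, a_n + b_n I) = q_1 + q_2 I$, with $q_1, q_2 \in \H$ independent of $I$, as claimed. Note it is essential here that the scalars $\alpha_J, \beta_J$ are real, so that they commute past the quaternionic coefficients $c_J$; had they been genuinely quaternionic the rearrangement would fail.

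The only step requiring a little care — and the one I would write out as a short induction — is the claim that substituting into a monomial $x^J$ yields $\alpha_J + \beta_J I$ with $\alpha_J, \beta_J$ given by fixed real polynomials in the $a_u, b_u$ independent of $I$. This follows by induction on the total degree $|J| = \sum_u j_u$: the base case $|J| = 0$ is trivial ($1 = 1 + 0\cdot I$), and for the inductive step one multiplies $(\alpha + \beta I)(a_u + b_u I) = (\alpha a_u - \beta b_u) + (\alpha b_u + \beta a_u) I$ using $I^2 = -1$, which exhibits the new coefficients as real polynomial combinations of the old ones and the $a_u, b_u$, still independent of $I$. I do not anticipate any genuine obstacle; the lemma is essentially the observation that each slice $L_I$ is a copy of $\C$ glued along $\R$ in a way compatible with polynomial evaluation, and the content is purely bookkeeping.
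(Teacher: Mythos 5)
Your proposal is correct and follows essentially the same route as the paper: reduce to a single monomial by linearity (using that the real coefficients $\alpha_J,\beta_J$ commute with the quaternionic coefficients of $f$), then induct on the degree using the uniform multiplication rule $(\alpha+\beta I)(a+bI)=(\alpha a-\beta b)+(\alpha b+\beta a)I$ in the slice $L_I$. The paper's proof is just a terser version of this same argument.
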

\begin{proof}
By linear extension, it is enough to prove the lemma for $f$ of the form $f(x_1,\dotsc,x_n)={x_1}^{i_1}\cdot \dotsc \cdot {x_n}^{i_n}$. For such $f$, the claim follows by induction on $\deg f$, and noting that for $a,b,c,d\in \R$ and $I\in \S$, $(a+bI)(c+dI)=(ac-bd)+(ad+bc)I$.
\end{proof}

\begin{defi} \label{def:embedded_sphere}
A set of the form $S=\{v_1\} \times \S_{v_2}$, where  $v_1\in \H^i$, $v_2\in \H^{n-i}_c$ and $1\leq i \leq n$, is called an embedded sphere in $\H^n$.
\end{defi}

From Lemma \ref{lem:affine_on_sphere} we conclude:
\begin{lemma} \label{lem:vanish_on_sphere} 
If a polynomial $f\in \H[x_1,\dotsc,x_n]$ vanishes on two distinct points of an embedded sphere in $\H^n$, then $f$ vanishes on the entire embedded sphere.
\end{lemma}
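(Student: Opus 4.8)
The plan is to reduce the statement to the one-variable fact about spheres via the structural description of $\S_v$ given just above Lemma~\ref{lem:affine_on_sphere}, together with the fact that central polynomials restrict to affine functions of the slice parameter (Lemma~\ref{lem:affine_on_sphere}). Let $S=\{v_1\}\times\S_{v_2}$ be an embedded sphere, with $v_1\in\H^i$ and $v_2\in\H^{n-i}_c$. Since $v_2\in\H^{n-i}_c$, by equation~(\ref{eq:slices}) we may write $v_2=(a_{i+1}+b_{i+1}I,\dotsc,a_n+b_nI)$ for some $I\in\S$ and $a_k,b_k\in\R$, and then $\S_{v_2}=\{(a_{i+1}+b_{i+1}J,\dotsc,a_n+b_nJ)\st J\in\S\}$.

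First I would fix the coordinates of $v_1$: write $v_1=(q_1,\dotsc,q_i)$. Then a point of $S$ has the form $(q_1,\dotsc,q_i,a_{i+1}+b_{i+1}J,\dotsc,a_n+b_nJ)$ for $J\in\S$, and I want to understand $g(J):=f(q_1,\dotsc,q_i,a_{i+1}+b_{i+1}J,\dotsc,a_n+b_nJ)$ as a function of $J\in\S$. The key step is to show $g(J)=q+q'J$ for some fixed $q,q'\in\H$ not depending on $J$. This is a mild generalization of Lemma~\ref{lem:affine_on_sphere}, which handles the case $i=0$ (all coordinates on the slice); here the first $i$ coordinates are arbitrary constants in $\H$. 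By linearity it suffices to treat a monomial $x_1^{m_1}\cdots x_n^{m_n}$; evaluating it at the above point gives a fixed quaternion $c:=q_1^{m_1}\cdots q_i^{m_i}$ times $(a_{i+1}+b_{i+1}J)^{m_{i+1}}\cdots(a_n+b_nJ)^{m_n}$, and the latter product, by exactly the induction in the proof of Lemma~\ref{lem:affine_on_sphere} (using $(a+bJ)(c+dJ)=(ac-bd)+(ad+bc)J$, valid since $a,b,c,d\in\R$ and all these elements lie in the commutative slice $L_J$), equals $\alpha+\beta J$ with $\alpha,\beta\in\R$; hence the monomial contributes $c\alpha+c\beta J$, which is of the desired form. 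Summing over monomials preserves the form $q+q'J$.

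Now I finish as in the one-variable theory. Suppose $f$ vanishes at two distinct points of $S$, say at $J=J_1$ and $J=J_2$ with $J_1\neq J_2$ in $\S$ (the points are distinct precisely when the corresponding slice parameters differ, since $v_2\notin\R^{n-i}$ would be needed for the sphere to be nondegenerate — if $v_2\in\R^{n-i}$ then $\S_{v_2}$ is a single point and the hypothesis ``two distinct points'' is vacuous, so we may assume some $b_k\neq 0$, which forces the map $J\mapsto$ the point to be injective). Then $q+q'J_1=0$ and $q+q'J_2=0$, so $q'(J_1-J_2)=0$; since $J_1-J_2\neq 0$ and $\H$ is a division ring, $q'=0$, whence $q=0$. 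Therefore $g(J)=0$ for all $J\in\S$, i.e. $f$ vanishes on all of $S$.

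I do not expect a serious obstacle here; the only point requiring a little care is the bookkeeping in the first step — confirming that pulling the constant prefix $q_1^{m_1}\cdots q_i^{m_i}$ out on the left is legitimate in our left-module evaluation convention (it is, by the definition of substitution with the chosen order $x_1>\dotsb>x_n$, since the prefix multiplies on the left of the remaining slice factors) and that the remaining factors genuinely all lie in a single commutative slice $L_J$ so that the scalar identity $(a+bJ)(c+dJ)=(ac-bd)+(ad+bc)J$ applies. Once that is in place, the division-ring cancellation argument is immediate and the lemma follows.
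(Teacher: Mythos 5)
Your proposal is correct and follows essentially the same route as the paper: both arguments reduce to the fact that along the sphere the polynomial is an affine function $q+q'J$ of the slice parameter $J\in\S$, and then use vanishing at two distinct parameters plus division-ring cancellation to force $q=q'=0$. The only cosmetic difference is that the paper first substitutes $v_1$ to obtain a central polynomial $f_{v_1}$ in $n-i$ variables and then cites Lemma~\ref{lem:affine_on_sphere} directly, whereas you redo the monomial induction with the constant prefix $q_1^{m_1}\cdots q_i^{m_i}$ pulled out on the left — both are valid under the paper's evaluation convention.
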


\begin{proof}
Let the embedded sphere be  $S=\{v_1\} \times \S_{v_2}$, and
let the two given vanishing points of $f$ be $(v_1,w)$ and $(v_1,w')$. Let us write $w=a+bI$ with $a,b\in \R^{n-i}$ and $I\in \S$. As $w$ and $w'$ are conjugate, there exists $q\in \H^*$ such that $w'=w^q$, hence $w'=a^q+(bI)^q=a+b I^q$. Note that $I\neq I^q$ (since $w\neq w'$), and $I^q\in \S$.

 Consider the function $f_{v_1}(x_{i+1},\dotsc x_n)=f(v_1,x_{i+1},\dotsc, x_n)$. Note that this function is a central polynomial in $n-i$ variables. We have $f_{v_1}(w)=f_{v_1}(w')=0$. 

By Lemma \ref{lem:affine_on_sphere}, there exist quaternions $q_1,q_2$ such that for any $J\in \S$, $f_{v_1}(a+bJ)=q_1+q_2J$. Since $f_{v_1}(a+bI)=f_{v_1}(a+bI^q)=0$, we get $q_1+q_2I=q_1+q_2 I^q=0$. Solving for $q_1,q_2$, we get $q_1=q_2=0$. Hence $f_{v_1}(a+bJ)=0$ for \emph{any} $J\in \S$. The result follows.
\end{proof}

We note that Lemma \ref{lem:vanish_on_sphere} is an $n$-variable generalization of \cite[Lemma 3.1]{GSS}.

\section{Proof of the main theorem} \label{sec:proof}

\begin{lemma} \label{lem:conj_root}
Let $I$ be a left ideal in $\H[x_1,\dotsc,x_n]$ and let $v=(q_1,\dotsc,q_n)\in \V(I)$. Let $1\leq i\leq n-1$ such that $q_i\neq 0$, and define $v_1=(q_1,\dotsc,q_i)$ and $v_2=(q_{i+1},\dotsc,q_n)$. Then $(v_1,v_2^{q_i})\in \V(I)$.    
\end{lemma}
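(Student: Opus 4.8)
The plan is to verify directly that every $f \in I$ satisfies $f(v_1, v_2^{q_i}) = 0$. Write $f = \sum_J a_J x^J$ with $J = (j_1,\dotsc,j_n)$ ranging over multi-indices and $a_J \in \H$. The point $(v_1, v_2^{q_i})$ has coordinates $q_1,\dotsc,q_i$ in positions $1,\dotsc,i$ and $q_i q_u q_i^{-1}$ in position $u$ for $i < u \le n$ (here the hypothesis $q_i \neq 0$ is used, so that $q_i$ is invertible in $\H$). Evaluating $f$ at this point according to the fixed order $x_1 > \cdots > x_n$, I would use the identity $(q_i q_u q_i^{-1})^{j_u} = q_i q_u^{j_u} q_i^{-1}$ and telescope the product over $u = i+1,\dotsc,n$, so that the interior factors $q_i^{-1} q_i$ cancel and one is left with $q_i \bigl( q_{i+1}^{j_{i+1}} \cdots q_n^{j_n}\bigr) q_i^{-1}$.

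Substituting this back into the sum, and using that right multiplication by the fixed quaternion $q_i^{-1}$ distributes over the sum, I would obtain
$$f(v_1, v_2^{q_i}) = \Bigl( \sum_J a_J \, q_1^{j_1} \cdots q_{i-1}^{j_{i-1}} q_i^{j_i + 1} q_{i+1}^{j_{i+1}} \cdots q_n^{j_n} \Bigr) q_i^{-1}.$$
The key observation is that the parenthesized sum is precisely the value at $v$ of the polynomial $x_i f = \sum_J a_J x^{J + e_i}$, where $e_i$ is the $i$-th standard multi-index: since the variables are central, multiplying $f$ on the left by $x_i$ merely raises the exponent of $x_i$ by one in every monomial, which is exactly what happens on the right-hand side above. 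Since $I$ is a left ideal and $f \in I$, we have $x_i f \in I$; since $v \in \V(I)$, this gives $(x_i f)(v) = 0$, and hence $f(v_1, v_2^{q_i}) = 0 \cdot q_i^{-1} = 0$. As $f \in I$ was arbitrary, $(v_1, v_2^{q_i}) \in \V(I)$.

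I do not expect a serious obstacle here; the only points requiring care are bookkeeping ones — correctly applying the non-commutative substitution convention (which is why the conjugation is by $q_i$ and why the computation only makes sense when $q_i \neq 0$), and recognizing the resulting expression as $(x_i f)(v)$ so that the left-ideal hypothesis can be invoked. I would also check the edge cases where some $q_u = 0$ for $u > i$, but the identities $(q_i q_u q_i^{-1})^{j_u} = q_i q_u^{j_u} q_i^{-1}$ and the telescoping remain valid there under the usual convention $0^0 = 1$, so nothing changes.
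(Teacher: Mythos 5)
Your proposal is correct and is essentially the paper's own argument: the paper proves the equivalent identity $(x_if)(v_1,v_2)=f(v_1,v_2^{q_i})\,q_i$ by checking it on monomials via the same conjugation-and-telescoping computation, and then invokes $x_if\in I$ exactly as you do. Writing the identity as $f(v_1,v_2^{q_i})=(x_if)(v)\,q_i^{-1}$ versus the paper's form is an immaterial difference.
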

\begin{proof} Let us first prove that the following formula holds for any $f\in \H[x_1,\dotsc x_n]$:
$$ (x_if)(v_1,v_2)=f(v_1,v_2^{q_i}) q_i$$
Indeed, it is enough to verify the formula for $f$ of the form $f={x_1}^{k_1}\dotsc {x_n}^{k_n}$. For such $f$, we have
\begin{align*}
    (x_if)(v_1,v_2)&={q_1}^{k_1}\dotsc {q_{i-1}}^{k_{i-1}}{q_i}^{k_i+1} {q_{i+1}}^{k_{i+1}} \dotsc q_n^{k_n}\\
    &={q_1}^{k_1}\dotsc {q_i}^{k_i} q_i {q_{i+1}}^{k_{i+1}} (q_{i}^{-1} q_i) \dotsc({q_i}^{-1} q_i) q_n^{k_n} ({q_i}^{-1} q_i) \\
    &={q_1}^{k_1}\dotsc {q_i}^{k_i} ({q_{i+1}}^{q_i})^{k_{i+1}} \dotsc ({q_n}^{q_i})^{k_n} q_i\\
    &=f(v_1,v_2^{q_i}) q_i
\end{align*}
Now, for any $f\in I$, we have $x_q f\in I$, hence
$ f(v_1,v_2^{q_i}) q_i=(x_if)(v_1,v_2)=0 $, so  $f(v_1,v_2^{q_i})=0$. \qedhere

\end{proof}
\begin{lemma}\label{lem:vanish_on_sphere_2}
Let $I$ be a left ideal in $\H[x_1,\dotsc,x_n]$ and let $v=(q_1,\dotsc,q_n)\in \V(I)\subseteq \H^n$. Let $1\leq i \leq n-1$ and assume that $(q_{i+1},\dotsc,q_n)\in \H^{n-i}_c$ but $(q_{i},\dotsc,q_n)\notin \H^{n-i+1}_c$. Then $\{(q_1,\dotsc,q_i)\} \times \S_{q_{i+1},\dotsc,q_n}\subseteq \V(I)$.   
\end{lemma}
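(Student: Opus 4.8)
The plan is to exhibit two \emph{distinct} points of the embedded sphere $S=\{(q_1,\dotsc,q_i)\}\times\S_{q_{i+1},\dotsc,q_n}$ that both lie in $\V(I)$, and then apply Lemma~\ref{lem:vanish_on_sphere}: for every $f\in I$ that polynomial then vanishes at two distinct points of $S$, hence on all of $S$, so $S\subseteq\V(I)$.

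First I would unpack the hypotheses. Write $v_1=(q_1,\dotsc,q_i)$ and $v_2=(q_{i+1},\dotsc,q_n)$. Since $v_2\in\H^{n-i}_c$ while $(q_i,v_2)\notin\H^{n-i+1}_c$, the only ``new'' pairs are $(q_i,q_\ell)$, so there is an index $k$ with $i+1\le k\le n$ for which $q_iq_k\ne q_kq_i$; in particular $q_i\ne 0$ (a zero entry commutes with everything), which makes Lemma~\ref{lem:conj_root} applicable. By (\ref{eq:slices}) there are $J\in\S$ and reals $a_\ell,b_\ell$ for $i+1\le\ell\le n$ with $q_\ell=a_\ell+b_\ell J$, and then $\S_{v_2}=\{(a_{i+1}+b_{i+1}K,\dotsc,a_n+b_nK):K\in\S\}$; thus $S=\{v_1\}\times\S_{v_2}$ is indeed an embedded sphere in the sense of Definition~\ref{def:embedded_sphere}.

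Next, apply Lemma~\ref{lem:conj_root} with this $i$ to obtain $(v_1,v_2^{q_i})\in\V(I)$. Since $v_2^{q_i}=(a_{i+1}+b_{i+1}J^{q_i},\dotsc,a_n+b_nJ^{q_i})$ with $J^{q_i}:=q_iJq_i^{-1}\in\S$, the point $(v_1,v_2^{q_i})$ lies on $S$, and so does $v=(v_1,v_2)\in\V(I)$. It remains to check $(v_1,v_2^{q_i})\ne v$, which is the only delicate point: from $0\ne q_iq_k-q_kq_i=b_k(q_iJ-Jq_i)$ (using that $a_k,b_k$ are real and hence central) we get $b_k\ne0$ and $q_iJ\ne Jq_i$, i.e. $J^{q_i}\ne J$; therefore the $k$-th coordinates $a_k+b_kJ^{q_i}$ and $a_k+b_kJ$ of the two points differ. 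This is exactly where the assumption $(q_i,\dotsc,q_n)\notin\H^{n-i+1}_c$ enters. Having two distinct points of $S$ inside $\V(I)$, Lemma~\ref{lem:vanish_on_sphere} applied to each $f\in I$ finishes the proof; I do not foresee any further obstacle.
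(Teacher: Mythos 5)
Your proof is correct and follows essentially the same route as the paper: apply Lemma~\ref{lem:conj_root} to get the second point $(v_1,v_2^{q_i})\in\V(I)$ and then invoke Lemma~\ref{lem:vanish_on_sphere}. You in fact supply details the paper leaves implicit, namely that $q_i\neq 0$ (so Lemma~\ref{lem:conj_root} applies) and the explicit verification that $v_2^{q_i}\neq v_2$ via the non-commuting coordinate $q_k$.
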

\begin{proof}
Let $v_1=(q_1,\dotsc,q_i)$ and $v_2=(q_{i+1},\dotsc,q_n)$. For any $f\in I$ we have, by Lemma \ref{lem:conj_root}, $f(v_1,v_2)=f(v_1,{v_2}^q_i)=0$. By the assumptions we have $v_2\neq {v_2}^{q_i}$. By Lemma \ref{lem:vanish_on_sphere}, we conclude that $f$ vanishes on $\{v_1\}\times \S_{v_2}$. The result follows.
\end{proof}
We now prove our main theorem.

\begin{customthm}{\ref{thm:main}}
Let $I$ be a left ideal of $R=\H[x_1,\dotsc,x_n]$. If a polynomial $f\in R$ vanishes on $\V_c(I)$, then $f$ vanishes on $\V(I)$.
\end{customthm}

\begin{proof}
Let us assume that $f\in R$ vanishes on $\V_c(I)$, and let $p=(q_1,\dotsc,q_n)\in \V(I)$. If $p\in \V_c(I)$ then clearly, $f(p)=0$. Otherwise, there exists $1\leq n_1 \leq n-1$ such that $(q_{{n_1}+1},\dotsc,q_n)\in \H^{n-{n_1}}_c$ but $(q_{n_1},\dotsc,q_n)\notin \H^{n-{n_1}+1}_c$. Let $w_1=(q_{n_1+1},\dotsc,q_n)$. 
By Lemma \ref{lem:vanish_on_sphere_2}, 
\begin{align} \label{eq:sphere_inclusion}
\{(q_1,\dotsc,q_{n_1})\} \times \S_{w_1}\subseteq \V(I). 
\end{align}
Since $w_1\in \H^{n-{n_1}}_c$, there exists $I_1\in \S$ such that $w_1\in (\R+\R I_1)^{n-n_1}$. Let $I_2\in \S$ be such that $q_{n_1}\in \R + \R I_2$. Note that $q_{n_1}\notin \R$. By Lemma \ref{lem:quaternion_facts}, $I_1$ and $\pm I_2$ are conjugate. Let $\alpha^{+}_1, \alpha^{-}_1 \in \H^*$ be such that $I_1^{\alpha^{\pm}_1}=\pm I_2$. By (\ref{eq:sphere_inclusion}),
We have $(q_1,\dotsc,q_{n_1}, {w_1}^{\alpha^{\pm}_1}) \in \V(I)$.

Next, let $0\leq n_2 <n_1$ be such that $q_{n_2+1},\dotsc q_{n_1} \in \R+ \R I_2$ but either $q_{n_2} \notin \R+\R I_2$ or $n_2=0$. Let $w_2=(q_{n_2+1},\dotsc,q_{n_1})$. Note that $(w_2,w_1^{\alpha^{\pm}_1})\in (\R+\R I_2)^{n-n_2} \subset \H^{n-n_2}_c$. By Lemma \ref{lem:vanish_on_sphere_2} we have 
\begin{align} \label{eq:sphere_inclusion_2}
\{(q_1,\dotsc,q_{n_2})\} \times \S_{(w_2,w_1^{\alpha^{\pm}_1})}\subseteq \V(I). 
\end{align}
If $n_2>0$, let $I_3\in \S$ be such that $q_{n_2}\in \R+\R I_3$. Let $\alpha^{\pm}_2 \in \H^*$ be such that $I_1^{\alpha^{\pm}_2}=\pm I_3$. By (\ref{eq:sphere_inclusion_2}),
We have $(q_1,\dotsc,q_{n_2}, {w_2}^{\alpha^{\pm}_2},{w_1}^{\alpha^{\pm}_2 \alpha^{\pm}_1}) \in \V(I)$ for any choice of signs in $\alpha^{\pm}_1, \alpha^{\pm}_2$.

Continuing in this fashion, we get $n=n_0>n_1>n_2>\dotsc>n_k=0$, $I_1,\dotsc, I_{k} \in \S$ and $\alpha^{\pm}_1,\dotsc,\alpha^{\pm}_{k-1}\in \H^*$ such that, for $w_i= (q_{{n_i}+1},\dotsc,q_{n_{i-1}})$, we have:

\begin{enumerate}
\item 
For $1\leq i \leq k-1$, $I_i^{\alpha^{\pm}_i}=\pm I_{i+1}$ and $w_i^{\alpha^+_i}\neq w_i^{\alpha^-_i}$.
\item
For $1 \leq i \leq k$, $(w_i, w_{i-1}^{\alpha^{\pm}_{i-1}},\dotsc, w_1^{\alpha^{\pm}_{i-1}
\alpha^{\pm}_{i-2}\dotsc \alpha^{\pm}_1}) \in (\R+\R I_k)^{n-n_k}\subset \H^{n-n_i}_c$, for any choice of signs in $\alpha^{\pm}_1,\dotsc, \alpha^{\pm}_{i-1}$.
\item 
For $1\leq i\leq k$, $(w_k,w_{k-1},\dotsc,w_i,w_{i-1}^{\alpha^{\pm}_{i-1}},\dotsc,
w_1^{\alpha_{i-1}^{\pm} \dotsc \alpha_1^{\pm}}) \in \V(I)$ for any choice of signs in $\alpha^{\pm}_1,\dotsc, \alpha^{\pm}_{i-1}$ (Note that $(w_k,\dotsc ,w_i)=(q_n,\dotsc,q_{n_{i-1}})$).
\end{enumerate}
Now, setting $i=k$ in the above, we get that $$(w_k,w_{k-1}^{\alpha^{\pm}_{k-1}},\dotsc, w_1^{\alpha^{\pm}_{k-1}\alpha^{\pm}_{k-2}\dotsc \alpha^{\pm}_1}) \in \V_c(I)$$ for any choice of signs in $\alpha^{\pm}_1,\dotsc, \alpha^{\pm}_{k-1}$, therefore $f$ vanishes on these points. Hence, for any choice of signs in $\alpha^{\pm}_1,\dotsc,\alpha^{\pm}_{k-2}$, $f$ vanishes on two distinct points of the embedded sphere $\{w_k\}\times 
\S_{(w_{k-1},w_{k-2}^{\alpha^{\pm}_{k-2}},\dotsc, w_1^{\alpha^{\pm}_{k-2}\dotsc \alpha^{\pm}_1})}$. 

By Lemma \ref{lem:vanish_on_sphere}, $f$ vanishes on the entire embedded sphere. In particular, for any choice of signs in $\alpha^{\pm}_1,\dotsc,\alpha^{\pm}_{k-2}$, we have $$f(w_k,w_{k-1},w_{k-2}^{\alpha^{\pm}_{k-2}},\dotsc, w_1^{\alpha^{\pm}_{k-2}\dotsc \alpha^{\pm}_1})=0$$

We now get that that for any choice on signs in $\alpha^{\pm}_1,\dotsc,\alpha^{\pm}_{k-3}$, $f$ vanishes on two distinct points of the embedded sphere $\{w_k,w_{k-1},w_{k-2}\}\times 
\S_{w_{k-3}^{\alpha^{\pm}_{k-3}},\dotsc, w_1^{\alpha^{\pm}_{k-3}\dotsc \alpha^{\pm}_1}}$. 

Continuing in this fashion, we get at the last stage that $f(w_k,\dotsc,w_2,w_1^{\alpha^{\pm}_1})=0$ and conclude, in the same way, that $f(p)=f(w_k,\dotsc,w_1)=0$. \qedhere

\end{proof}

Let us conclude this paper with the following question: Does Theorem \ref{thm:main} hold if ones replaces the quaternion algebra $\H$ with an arbitrary division ring?

\end{document}